\def\ones  { \mathbf{1} } 
\def\zeros { \mathbf{0} } 
\def\argmin { \textrm{argmin}}  
\def\dist    { \textrm{dist}} 	
\def\st     { \,\textrm{s.t.}\,}    
\def\IN {\mathbb{N}} 		
\def\IR {\mathbb{R}} 			
\def\IRm {\IR^{m}} 			
\def\IRn {\IR^{n}} 			
\def\D { \mathbf{D}}
\def\I { \mathbf{I}}
\def\X { \mathbf{X}} 
\def\Y { \mathbf{Y}}
\def\p { \mathbf{p}}
\def\q { \mathbf{q}}
\def\u { \mathbf{u}}
\def\v { \mathbf{v}}
\def\w { \mathbf{w}}
\def\x { \mathbf{x}} 
\def\y { \mathbf{y}}
\def\cI { \mathcal{I}}
\def\cL { \mathcal{L}}
\def\cN { \mathcal{N}}
\def\cO { \mathcal{O}}
\def\cU { \mathcal{U}}
\def\cX { \mathcal{X}}
\def\bSigma {\boldsymbol{\Sigma}}
\crefname{section}{§}{§§}
\Crefname{section}{§}{§§}
\newtheorem{theorem}{Theorem}
\newtheorem{lemma}{Lemma}
\newtheorem{corollary}[theorem]{Corollary}
\newtheorem{remark}{Remark}
\def\@fnsymbol#1{\ensuremath{\ifcase#1\or *\or \dagger\or \ddagger\or
   \mathsection\or \mathparagraph\or \|\or **\or \dagger\dagger
   \or \ddagger\ddagger \else\@ctrerr\fi}}
\title{Fast Projection onto the Capped Simplex with Applications to Sparse Regression in Bioinformatics}
\author{%
Andersen Ang$^*$ \\
Dept. of Combinatorics and Optimization\\
University of Waterloo\\
\texttt{ms3ang@uwaterloo.ca} 
\And
Jianzhu Ma \\
Institute for Artificial Intelligence\\
Peking University\\
\texttt{majianzhu@pku.edu.cn} 
\vspace{-20mm}
\And
Nianjun Liu \\
Dept. of Epidemiology and Biostatistics\\
Indiana University Bloomington\\
\texttt{liunian@indiana.edu} 
\And
Kun Huang\\
Dept. of Biostatistics and Health Data Science \\
Indiana University\\
\texttt{kunhuang@iu.edu} \\
\vspace{-20mm}
\And
Yijie Wang\thanks{Both authors contributed equally.} \\
Dept. of Computer Science\\
Indiana University Bloomington\\
\texttt{yijwang@iu.edu} \\
\vspace{-10mm}
}
\begin{document}
\maketitle

\begin{abstract}
We consider the problem of projecting a vector onto the so-called k-capped simplex, which is a hyper-cube cut by a hyperplane.
For an n-dimensional input vector with bounded elements, we found that a simple algorithm based on Newton's method is able to solve the projection problem to high precision with a complexity roughly about O(n), which has a much lower computational cost compared with the existing sorting-based methods proposed in the literature.
We provide a theory for partial explanation and justification of the method.

We demonstrate that the proposed algorithm can produce a solution of the projection problem with high precision on large scale datasets, and the algorithm is able to significantly outperform the state-of-the-art methods in terms of runtime (about 6-8 times faster than a commercial software with respect to CPU time for input vector with 1 million variables or more).

We further illustrate the effectiveness of the proposed algorithm on solving sparse regression in a bioinformatics problem.
Empirical results on the GWAS dataset (with 1,500,000 single-nucleotide polymorphisms) show that, when using the proposed method to accelerate the Projected Quasi-Newton (PQN) method, the accelerated PQN algorithm is able to handle huge-scale regression problem and it is more efficient (about 3-6 times faster) than the current state-of-the-art methods.
\end{abstract}

\section{Introduction}\label{sec:intro}
The $k$-capped simplex~\cite{wang2015projection} is defined as 
$ \Delta_k \coloneqq 
\big\{ \x \in \IRn \,\big|\,
\x^\top \ones 
\leq k, ~ \zeros \leq \x \leq \ones 
\big\}
$,
where $k \in \IR$ is an input parameter, $\zeros$ and $\ones$ are vectors of zeros and vector of ones in $\IRn$, respectively (resp.), and the sign $\leq$ is taken element-wise.
Geometrically, for some $k >0$, the set $\Delta_k$ represents an $n$-dimensional hyper-cube $[\zeros,\ones]^n$ with a corner chopped off by the half-space $\x^\top\ones \leq k$. 
If the half-space constraint $\x^\top\ones \leq k$ in $\Delta_k$ is replaced by the hyper-plane constraint $\x^\top\ones = k$, for some $k >0$, this set represents the slice of the hyper-cube cut by the hyper-plane, and we denote this set as $\Delta_k^{=}$.
In this paper, we consider the problem of projecting a vector $\y$ onto the $k$-capped simplex:
\begin{equation}\label{praw}
\x^* =~
\text{proj}_{\Delta_k}(\y)
~=~
\underset{\x}{\argmin} 
\Big\{\,
\dfrac{1}{2} \|\x-\y\|^2_2
~\st~ \x \in \Delta_k ~(\text{or } \Delta_k^=)
\,\Big\}
.
\end{equation}
\begin{remark}
(\textbf{On the input $k$})~~
In this paper, we assume $0 < k < \infty$ is properly selected such that the set $\Delta_k$ (and $\Delta_k^=$) has a nonempty interior, and therefore the projection is feasible.
I.e., we assume $\big\{ \p\in \IRn ~|~ \p^\top \ones \leq k \big\} \cap \big\{ \q\in \IRn ~|~ \zeros \leq \q \leq \ones \big\} \neq \varnothing$ (and similarly for $\Delta_k^=$).
In this setting, a unique global minimizer $\x^*$ of Problem\eqref{praw} exists, since the function $\| \x - \y \|_2^2$ is strongly convex.
The value of $k$ is closely related to the root of a piecewise-linear equation that plays a critical role in the solution of Problem \eqref{praw}, see discussion in \cref{sec:backgroud}.
\end{remark}

Problem~\eqref{praw} generalizes the projection onto the probability simplex \cite{condat2016fast,duchi2008efficient} which has many applications.
In this paper, we consider the application in sparse regression in bioinformatics, see \cref{sec:bio}.

To the best of our knowledge, for an input vector $\y \in \IRn$, the state-of-the-art algorithm~\cite{wang2015projection} for solving the Projection \eqref{praw} has a time complexity of $\cO(n^2)$.
We found that a simple heuristic algorithm based on Newton's method is able to solve the same problem to high precision with a complexity roughly $\cO(n)$ on average, based on exploiting the 2nd-order information of a scalar minimization problem on the Lagrangian multiplier of Problem \eqref{praw}; see \cref{subsec:newton}.

\paragraph{Contributions}
We proposed a simple algorithm based on Newton's method, as an alternative to the sorting-based method to solve Problem~\eqref{praw}.
We first convert the problem into a scalar minimization problem, then we show that such scalar problem can be solved using Newton's iteration, see Theorem~\ref{thm1}.
We then show that the proposed method is superior to the current state-of-the-art method in terms of runtime.
For a vector with dimension $10^6$, the proposed method is about 6-8 times faster than a  commercial solver.
Furthermore, we give numerical results to show the effectiveness of the proposed method on solving sparse regression in a bioinformatics problem on a real-world dataset with one million data points, see \cref{sec:bio}.

\paragraph{Scope, limitation, and impact}
We emphasize that the proposed method is not aimed at replacing all the well-established sorting-based methods in the literature, but to provide an alternative method for solving the projection problem in a special case.
To be specific, we found that, when the input vector $\y$ has bounded elements (i.e., $|y_i| \leq \alpha$, for $\alpha$ up to 1000), the proposed algorithm in this paper is able to solve the projection problem much faster than the existing methods in the literature.
It is important to note that, when the bound $\alpha$ becomes very huge (say a million), the proposed method sometimes converges slower than existing methods.
Although limited in the scope of application, however, we argue that the proposed method is still very useful for applications that the input vectors are bounded.
Using bioinformatics as an example, on an ordinary PC, we are able to reduce the runtime on identifying the genetic factors that contribute to dental caries (i.e., tooth decay) from half-hour to just 3 minutes; see \cref{sec:bio}.

\paragraph{Paper organization}
In \cref{sec:backgroud}, we give the background material, show the main theorem, and discuss the proposed algorithm.
In \cref{sec:exp}, we show experimental results to verify the efficiency of the proposed algorithm on solving Problem \eqref{praw}.
In \cref{sec:bio}, we illustrate the effectiveness of the proposed method on solving sparse regression in a large bioinformatics dataset.
We conclude the paper in \cref{sec:conc} and provide supplementary material in the appendix.

\section{Projection onto the capped simplex}\label{sec:backgroud}
In this section, we discuss how to solve Problem~\eqref{praw}.
By introducing a Lagrangian multiplier $\gamma$, we convert \eqref{praw} to a scalar problem in the form of $\min_\gamma \omega(\gamma)$, see \eqref{optr}.
Then, we provide a theorem for characterizing the function $\omega(\gamma)$.
More specifically, we show that $\omega(\gamma)$ is $n$-smooth, i.e., the Lipschitz constant of the derivative $\omega^{\prime}(\gamma)$ grows linearly as the dimension $n$, which makes the gradient method ineffective to solve Problem \eqref{optr} due to very small stepsize for input vector $\y$ that has many variables.
This motivates us to switch to the 2nd-order method, i.e., Newton's method, to solve Problem \eqref{optr}.
We end this section by discussing the properties of proposed Newton's method.

\subsection{Reformulate the projection problem to a scalar minimization problem}\label{sec:minmax}
Consider the case of projection onto $\Delta_k^=$\,\footnote{For projection onto $\Delta_k$, we can use the same approach by adding nonnegativity constraint $\gamma \geq 0$ in \eqref{praw_eq}.}.
Problem~\eqref{praw} is equivalent to Problem~\eqref{praw_eq}: by introducing a Lagrangian multiplier $\gamma$ that corresponds to the equality constraint, we form a partial Lagrangian $\cL(\x,\gamma)$, and arrive at the min-max problem~\eqref{praw_eq}, where we keep the inequality constraint on $\x$ as
\begin{equation}\label{praw_eq}
\min_{\zeros \leq \x \leq \ones} \max_{\gamma \in \IR} : 
\Big\{\,
\cL(\x,\gamma) 
= \dfrac{1}{2} \|\x-\y\|^2 + \gamma (\x^\top\ones - k)
\,\Big\}.
\end{equation}

\paragraph{Closed-form solution on $\x$}
Let $\gamma^*$ be the optimal $\gamma$ in \eqref{praw_eq}, then the minimizer $\x^*$ of \eqref{praw_eq} is
\begin{equation}\label{optx}
 \x^*(\gamma^*) = 
\underset{ \zeros \leq \x \leq \ones}{\argmin}\,
\Bigg\{\,
\cL(\x) = \sum_{i=1}^n \frac{1}{2} x_i^2 + (\gamma^* - y_i) x_i 
\,\Bigg\}
\overset{\eqref{def:v}}{=}
\min \Big\{
\ones, [\v(\gamma^*)]_+ 
\Big\},
\end{equation}
where $[\,\cdot\,]_+ = \max\{ \,\cdot\,, 0 \,\}$, both $\max,\min$ in \eqref{optx}
are taken element-wise, and $\v(\gamma)$ is defined as 
\begin{equation}
\v(\gamma) = \y - \gamma \ones.
\label{def:v}
\end{equation}

As stated in the introduction, under a sufficiently large $k>0$, such $\x^*$ exists in the relative interior of $\Delta_k^=$, which implies Slater's condition is true and thereby strong duality holds, i.e., we have the first equality sign in \eqref{optx}.
Furthermore, we can swap the order of the min-max in \eqref{praw_eq} since it has a saddle point.
By swapping the order of min-max, and using \eqref{optx}, we convert Problem \eqref{praw_eq} to a scalar minimization problem on $\gamma$:
\begin{equation}\label{optr}
\min_{\gamma \in \IR} 
~:~ 
\bigg\{\,
\omega(\gamma)
\coloneqq
-\cL \Big( 
       \min \big\{  \ones, [\v(\gamma)]_+  \big\}   \,,\,   \gamma
      \Big) 
\,\bigg\},
\end{equation}
where the negative sign comes from flipping the maximization on $\gamma$ to minimization.
Now, it is clear that if we solve~\eqref{optr}, we can construct the solution $\x^*$ as in~\eqref{optx} to solve Problem \eqref{praw}.
Many papers \cite{condat2016fast,duchi2008efficient,wang2015projection} in the literature follow such a direction on solving the projection problem, in which they solve Problem~\eqref{optr} using a sorting-based technique to find the root of the piecewise-linear function $\omega^{\prime}(\gamma) = 0$ (see Theorem \ref{thm1} for the expression of $\omega^{\prime}$).

In this paper, we solve \eqref{optr} by Newton's method.
It is important to note that we can solve the same problem using gradient descent, but we found that it has a slow convergence, because $\omega(\gamma)$ can have a huge smoothness constant (see Theorem \ref{thm1}) and thus the feasible stepsize for the gradient step such that the function value will decrease is very small, which contributes to a slow convergence.
We also observed that empirically the proposed method in this paper (see Algorithm~\ref{algo:newton}) yields a much faster convergence (about 6-8 times faster) than the sorting-based methods, especially when the input vector $\y$ has a huge dimension; see \cref{sec:exp} for the experimental results.

Before we proceed, we illustrate the idea of solving the projection on a toy example, see Fig.\ref{fig:eg}.

\begin{figure}
  \centering
  \includegraphics[width=0.99\textwidth]{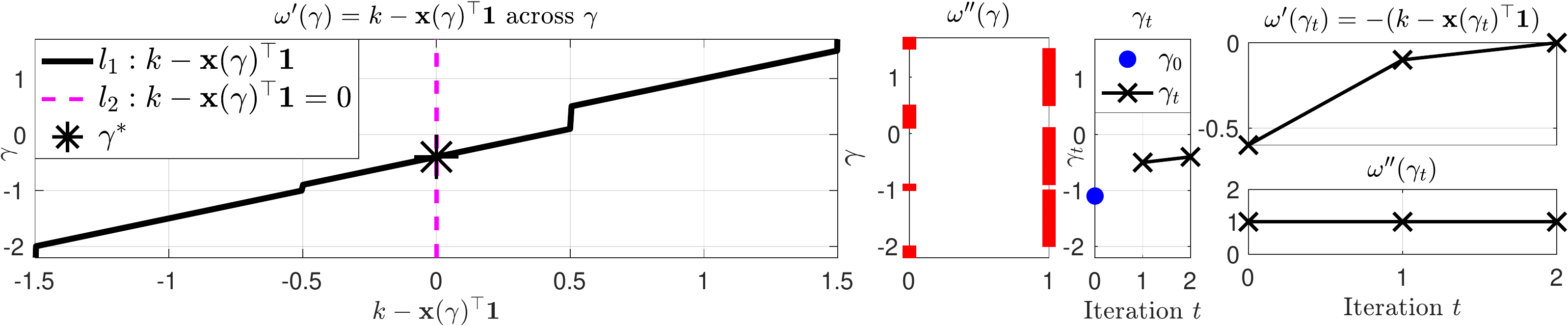}
  \caption{Illustration on solving Problem \eqref{optr} on a toy example with $\y = [0.1, 1.5, -1] \in \IR^3, k=1.5$ and $ \gamma_0 = -1.1$.
  The line $l_2$ intersects the piecewise-liner function $l_1$ at $\gamma^*$, which is the root of $l_2$.
  The idea of the sorting-based methods is to find such root by sorting the 3 elements of $\y$ and try each of the 3 linear segment of $l_1$ to find the root.
  The proposed method (Algorithm \ref{algo:newton}) solves the problem in 2 iterations, hence it gains speed-up.
  In the middle we plot $\omega^{\prime\prime}$ and the iteration of $\gamma_t$.
  The right-most plots show the iteration of $\omega^\prime$ and $\omega^{\prime\prime}$ (see Theorem \ref{thm1} for their explicit expression).
  Note that $l_1, l_2$ may not intersect each other for some $(\y,k)$, in this case $l_2$ has no root and the sorting-based methods do not work.
  However, the proposed algorithm can still produce an approximate solution.
  }
  \label{fig:eg}
\end{figure}

\subsection{The derivatives and the huge smoothness constant}
Now we talk about the derivatives of $\omega(\gamma)$ with respect to (w.r.t.) $\gamma$.
First, we show that $\omega(\cdot)$ is continuously differentiable and we derive its 1st and 2nd order derivatives.
Then we show that the Lipschitz constant $L$ of $\omega^{\prime}(\cdot)$ is $n$, which is the dimension of the input vector $\y \in \IRn$.
With a huge $L$, we observed that gradient descent has a slow convergence on minimizing $\omega(\gamma)$, hence we propose to use the 2nd-order method, i.e., Newton's method, to solve \eqref{praw_eq}; see \cref{subsec:newton}.

We now present the main theorem in this paper, which is about the derivatives of $\omega(\gamma)$.
\begin{theorem}\label{thm1}
The function $\omega(\gamma)$ is convex and twice differentiable with
\begin{equation}\label{dev}
\omega^{\prime}(\gamma) 
= k -\min\Big( \ones, [\v(\gamma)]_+ \Big)^\top \ones
~~~\text{ is }n\text{-Lipschitz},~~
\text{and}
~~~
\omega^{\prime\prime}(\gamma) = \sum_{i=1}^n I_{0<v_i<1} \geq 0,
\end{equation}
where $I$ is an indicator function that $I_A = 1$ if $A$ is true and $I_A = 0$ otherwise. 
\end{theorem}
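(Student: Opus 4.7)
The plan is to exploit the fact that the inner minimization defining $\omega$ is a strongly convex separable problem with an explicit closed form, so that all derivative information about $\omega$ can be read off directly from $\x^*(\gamma)$. Concretely, I would first set $h(\gamma) \coloneqq \mathcal{L}(\x^*(\gamma),\gamma)$, so $\omega(\gamma) = -h(\gamma)$, and then apply Danskin's envelope theorem: since the inner problem in \eqref{optx} has a unique minimizer $\x^*(\gamma)$ for every $\gamma$ (the objective is $1$-strongly convex in $\x$), $h$ is differentiable with
\[
h'(\gamma) \;=\; \frac{\partial \mathcal{L}}{\partial \gamma}\bigl(\x^*(\gamma),\gamma\bigr) \;=\; \x^*(\gamma)^\top \ones - k,
\]
which immediately yields the stated formula $\omega'(\gamma) = k - \min\{\ones,[\v(\gamma)]_+\}^\top \ones$.

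Next I would compute $\omega''$ by differentiating $\omega'$ componentwise in $\gamma$. Using $v_i(\gamma) = y_i - \gamma$, the $i$-th coordinate of $\x^*(\gamma)$ is the piecewise-linear function
\[
x_i^*(\gamma) \;=\; \begin{cases} 0, & \gamma \ge y_i,\\ y_i - \gamma, & y_i - 1 < \gamma < y_i,\\ 1, & \gamma \le y_i - 1,\end{cases}
\]
which is continuous in $\gamma$ and differentiable everywhere except at the two break points $\gamma \in \{y_i-1, y_i\}$; the slope is $-1$ on the middle piece and $0$ otherwise. Summing the $n$ coordinates gives, away from the finite set $\bigcup_i \{y_i-1,y_i\}$,
\[
\omega''(\gamma) \;=\; -\frac{d}{d\gamma}\sum_{i=1}^n x_i^*(\gamma) \;=\; \sum_{i=1}^n I_{0 < v_i(\gamma) < 1},
\]
which is the stated expression. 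Since the one-sided limits at the break points agree, $\omega'$ is continuous on all of $\IR$, and the formula for $\omega''$ holds in the sense of the a.e.\ (or one-sided) derivative of $\omega'$.

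Finally, convexity and the Lipschitz bound are direct consequences. Because each indicator $I_{0<v_i<1}$ takes values in $\{0,1\}$, we get $0 \le \omega''(\gamma) \le n$ pointwise; the lower bound gives convexity of $\omega$ (since $\omega'$ is then nondecreasing), and the upper bound, combined with the mean value theorem applied to $\omega'$ on the intervals between break points and continuity of $\omega'$ across them, yields $|\omega'(\gamma_1) - \omega'(\gamma_2)| \le n|\gamma_1 - \gamma_2|$, i.e.\ the $n$-Lipschitz claim.

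\paragraph{Main obstacle.} The delicate point is that $\omega''$ is only piecewise constant with jump discontinuities at the $2n$ break points $\gamma \in \{y_i - 1, y_i\}_{i=1}^n$, so ``twice differentiable'' must be read as $\omega \in C^1$ with a second derivative that exists everywhere except on a finite set. The argument needs to justify (i) that $\x^*(\gamma)$ is genuinely continuous across these break points so $\omega'$ is $C^0$ globally, and (ii) that the Lipschitz estimate for $\omega'$ survives these kinks, which follows by gluing intervals of smoothness together via the continuity of $\omega'$. Everything else is a routine case analysis on the three regions defining $x_i^*(\gamma)$.
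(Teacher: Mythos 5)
Your proposal is correct, and its core coincides with the paper's proof: the paper also obtains $\omega'$ from an envelope theorem (its Lemma~\ref{lem1}, quoted from Bonnans--Shapiro, is precisely the Danskin-type result you invoke), and its computation of $\omega''$ via the index sets $S_0$, $S_1$, $S$ is the same three-region case analysis you carry out coordinatewise. You genuinely diverge on the remaining two claims. For convexity, the paper appeals to the existence of a saddle point (von Neumann) to conclude that $\phi=-\omega$ is concave as a dual function; you instead note that $\omega'$ is nondecreasing because each coordinate $\min\{1,[y_i-\gamma]_+\}$ is nonincreasing in $\gamma$ --- more elementary, and it avoids duality entirely. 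For the $n$-Lipschitz bound, the paper proves a separate nonexpansiveness lemma (Lemma~\ref{lem2}) for the map $\x\mapsto\min(\ones,[\x]_+)$ and then pays two factors of $\sqrt n$ (one from Cauchy--Schwarz against $\ones$, one from $\|\gamma_1\ones-\gamma_2\ones\|_2$); you instead integrate the a.e.\ bound $0\le\omega''\le n$ across the breakpoints, using continuity of $\omega'$ to glue the pieces together. Both are sound; the paper's route is global and never has to mention the kinks, while yours is more self-contained but must justify the gluing, which you do. Finally, your explicit caveat that ``twice differentiable'' can only mean $\omega\in C^1$ with $\omega''$ existing off the finite set $\bigcup_i\{y_i-1,\,y_i\}$ is a point of care the paper's proof skips: it treats $S_0,S_1,S$ as locally constant without comment, so your reading is the honest statement of what is actually proved.
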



Now we present two lemmas to prove the Theorem~\ref{thm1}.
For the first lemma, recall that for all $a,b \in \IR$,
\[
\max\{a, b \}
= \frac{1}{2}\Big( a + b + |a-b| \Big)
~~~\text{and}~~~
\min\{a, b \}
= \frac{1}{2}\Big( a + b - |a-b| \Big).
\tag{$\star$}
\]

\begin{lemma}\label{lem2}
$\forall \x,\y\in \IRn$, we have 
$ \Big\|
\min\big( \ones , [\x]_+ \big) - \min\big( \ones , [\y]_+ \big)
\Big\|_2 \leq \|\x-\y\|_2$.
\begin{proof}
By triangle inequality and ($\star$), for all $a,b \in \IR$, we have
$\big|
\min(a,1)-\min(b,1)
\big|
<
\big|
a-b
\big|$
 and 
$
\big|
[a]_+-[b]_+
\big|
<
\big|
a-b
\big|
$
.
Therefore,
\[
\big\|
\min(\ones, [\x]_+) - \min(\ones, [\y]_+)
\big\|_2
\leq 
\big\|
[\x]_+ - [\y]_+
\big\|_2 
\leq 
\| \x-\y \|_2.
\]
\end{proof}
\end{lemma}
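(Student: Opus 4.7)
The plan is to reduce the vector inequality to a coordinate-wise scalar inequality, and then to prove the scalar version by observing that each of the two scalar operations, namely $t\mapsto [t]_+ = \max(t,0)$ and $s \mapsto \min(s,1)$, is 1-Lipschitz. Composing two 1-Lipschitz maps yields a 1-Lipschitz map, so $t \mapsto \min(1,[t]_+)$ (which is just the projection of $t$ onto $[0,1]$) is itself 1-Lipschitz; squaring and summing over coordinates then gives the stated vector bound.

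Concretely, I would first establish the two scalar nonexpansiveness facts. Using the identity $(\star)$ given just above the lemma, for any $a,b \in \IR$ one writes
\[
[a]_+ - [b]_+ = \tfrac{1}{2}\bigl( (a-b) + (|a|-|b|) \bigr),
\]
and by the reverse triangle inequality $\bigl||a|-|b|\bigr| \leq |a-b|$, so $\bigl|[a]_+-[b]_+\bigr| \leq |a-b|$. An analogous computation, now using $\min(s,1) = \tfrac{1}{2}(s+1 - |s-1|)$, gives $|\min(a,1) - \min(b,1)| \leq |a-b|$. Applying this second bound with $a := [x_i]_+$ and $b := [y_i]_+$, and then chaining with the first bound, yields the coordinate-wise estimate
\[
\bigl|\min(1,[x_i]_+) - \min(1,[y_i]_+)\bigr| \;\leq\; \bigl|[x_i]_+ - [y_i]_+\bigr| \;\leq\; |x_i - y_i|.
\]

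Finally, I would square this scalar inequality coordinate-by-coordinate, sum over $i=1,\dots,n$, and take the square root to obtain
\[
\bigl\| \min(\ones,[\x]_+) - \min(\ones,[\y]_+) \bigr\|_2
\leq \bigl\| [\x]_+ - [\y]_+ \bigr\|_2
\leq \| \x-\y \|_2,
\]
which is the claim. There is really no hard step here: the only mild subtlety is to be careful to apply the two scalar Lipschitz bounds in the correct order (first on the outer $\min(\cdot,1)$, then on the inner $[\cdot]_+$), and to remember that monotone coordinate-wise domination of absolute differences passes to the $\ell_2$ norm because squaring preserves the inequality.
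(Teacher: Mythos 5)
Your proof is correct and follows essentially the same route as the paper's: both establish the scalar nonexpansiveness of $[\cdot]_+$ and $\min(\cdot,1)$ via the identity $(\star)$ (with the reverse triangle inequality), chain them coordinate-wise, and pass to the $\ell_2$ norm. If anything, your version is slightly more careful than the paper's, which incorrectly states the scalar bounds with strict inequality $<$ (false, e.g., when $a,b\le 1$ so that $\min(a,1)-\min(b,1)=a-b$); your non-strict $\le$ is the right statement.
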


The second lemma is related to the notion of optimal value function~\cite[Section 4]{10.2307/2653333}. 
\begin{lemma}\label{lem1}
[Theorem 4.1~\cite{10.2307/2653333}]
Let $\cX$ be a metric space and $\cU$ be a normed space.
Suppose that for all $\x \in \cX$, the function $\cL(\x,\cdot)$ is differentiable and that $\cL(\x,\gamma)$ and $D_{\gamma}\cL(\x,\gamma)$ (the partial derivative of $\cL(\x,\gamma)$ w.r.t. $\gamma$) are continuous on $\cX \times \cU$. 
Let $\Phi$ be a compact subset of $\cX$. 
Then the optimal value function
$
\phi(\gamma) \coloneqq \displaystyle \inf_{\x\in \Phi}\cL(\x,\gamma),
$
is (Hadamard) directionally differentiable. 
In addition, if $\forall \gamma \in \cU$, $\cL(\cdot, \gamma)$ has a unique minimizer $\x(\gamma)$ over $\Phi$, then $\phi(\gamma)$ is differentiable at $\gamma$ and the slope of $\phi(\gamma)$ w.r.t. $\gamma$ is given by
$
\phi^\prime(\gamma) = D_\gamma \cL(\x(\gamma),\gamma).
$
\end{lemma}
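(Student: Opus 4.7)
The plan is to decompose the theorem into three pieces (the formula for $\omega'$, the $n$-Lipschitz bound on $\omega'$, and the coordinate-wise formula for $\omega''$ which then yields convexity) and to discharge each piece using the two lemmas just recorded.

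First I would recover the expression for $\omega'(\gamma)$ by invoking Lemma~\ref{lem1}. Writing $-\omega(\gamma) = \min_{\zeros \leq \x \leq \ones}\cL(\x,\gamma)$ with $\cL(\x,\gamma) = \tfrac{1}{2}\|\x-\y\|^2 + \gamma(\x^\top\ones - k)$, I would check the hypotheses: the feasible box $[\zeros,\ones]^n$ is compact, $\cL$ and $D_\gamma\cL(\x,\gamma) = \x^\top\ones - k$ are jointly continuous, and for each fixed $\gamma$ the objective $\cL(\cdot,\gamma)$ is $1$-strongly convex, so the minimizer $\x^*(\gamma) = \min\{\ones,[\v(\gamma)]_+\}$ identified in \eqref{optx} is unique. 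Lemma~\ref{lem1} then gives $(-\omega)'(\gamma) = D_\gamma \cL(\x^*(\gamma),\gamma) = \x^*(\gamma)^\top\ones - k$, which rearranges to $\omega'(\gamma) = k - \min(\ones,[\v(\gamma)]_+)^\top\ones$.

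Next I would establish the $n$-Lipschitz property by combining Lemma~\ref{lem2} with Cauchy--Schwarz. For any $\gamma_1,\gamma_2\in\IR$,
\[
|\omega'(\gamma_1)-\omega'(\gamma_2)|
= \bigl|\ones^\top\bigl(\x^*(\gamma_2)-\x^*(\gamma_1)\bigr)\bigr|
\leq \sqrt{n}\,\|\x^*(\gamma_1)-\x^*(\gamma_2)\|_2.
\]
Because $\v(\gamma)=\y-\gamma\ones$, Lemma~\ref{lem2} gives $\|\x^*(\gamma_1)-\x^*(\gamma_2)\|_2 \leq \|\v(\gamma_1)-\v(\gamma_2)\|_2 = \sqrt{n}\,|\gamma_1-\gamma_2|$, so the composite bound is $n|\gamma_1-\gamma_2|$.

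For the $\omega''$ formula I would argue coordinate-wise. Each component $x_i^*(\gamma) = \min\{1,\max\{0,y_i-\gamma\}\}$ is a non-increasing piecewise-linear function of $\gamma$ with slope $-1$ on the open interval $(y_i-1,y_i)$ (equivalently $0<v_i<1$) and slope $0$ elsewhere. Differentiating $\omega'(\gamma)=k-\sum_i x_i^*(\gamma)$ therefore yields $\omega''(\gamma)=\sum_i I_{0<v_i<1}$ at every $\gamma$ outside the finite set $\{y_i,y_i-1:i\in[n]\}$. Since the integrand is nonnegative and $\omega'$ is absolutely continuous (being Lipschitz), the fundamental theorem of calculus yields that $\omega'$ is monotone non-decreasing on $\IR$, hence $\omega$ is convex.

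The main obstacle is purely one of bookkeeping around the kinks: $\omega'$ is piecewise linear, so $\omega''$ does not exist classically at the finitely many breakpoints $\gamma\in\{y_i,y_i-1\}$, and the statement ``twice differentiable'' must be read as ``twice differentiable almost everywhere'' (or equivalently, $\omega'$ is absolutely continuous with a.e.\ derivative given by the displayed step function). This requires a short justification but no real technical work beyond invoking Lipschitz continuity.
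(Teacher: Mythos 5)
Your proposal does not prove the statement at hand. The statement is Lemma~\ref{lem1} itself---a Danskin-type theorem asserting (Hadamard) directional differentiability of the optimal value function $\phi(\gamma)=\inf_{\x\in\Phi}\cL(\x,\gamma)$ of a general parametric minimization over a compact set, and genuine differentiability with $\phi^\prime(\gamma)=D_\gamma\cL(\x(\gamma),\gamma)$ when the minimizer is unique. What you wrote is instead a proof of Theorem~\ref{thm1} (the formula for $\omega^\prime$, the $n$-Lipschitz bound, the expression for $\omega^{\prime\prime}$, and convexity): your very first step is ``invoking Lemma~\ref{lem1}'', so as an argument for Lemma~\ref{lem1} it is circular, and the remaining steps (Lemma~\ref{lem2} plus Cauchy--Schwarz, the coordinate-wise slope count) never engage the abstract content of the lemma---nowhere do you use compactness of $\Phi$ or joint continuity of $\cL$ and $D_\gamma\cL$ to control a value function beyond the specific quadratic $\cL$ of Problem~\eqref{praw_eq}.

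For the record, the paper does not prove Lemma~\ref{lem1} either: it imports it verbatim as Theorem 4.1 of \cite{10.2307/2653333}, so an acceptable blind answer could simply have flagged it as a cited external result; a self-contained proof would run along standard Danskin lines. Fix a direction $d$ and $t\downarrow 0$, let $\x_t$ minimize $\cL(\cdot,\gamma+td)$ over $\Phi$; comparing competitors gives $\cL(\x_0,\gamma+td)-\cL(\x_0,\gamma)\;\geq\;\phi(\gamma+td)-\phi(\gamma)\;\geq\;\cL(\x_t,\gamma+td)-\cL(\x_t,\gamma)$; applying the mean value theorem in the $\gamma$-variable, extracting a convergent subsequence $\x_t\to\bar{\x}$ by compactness of $\Phi$ (whose limit is a minimizer at $\gamma$ by joint continuity of $\cL$), and using continuity of $D_\gamma\cL$ yields $\phi^\prime(\gamma;d)=\min_{\x\in S(\gamma)}D_\gamma\cL(\x,\gamma)\,d$ with $S(\gamma)$ the solution set; when $S(\gamma)=\{\x(\gamma)\}$ this is linear in $d$, giving differentiability with the stated slope. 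As a side remark, the Theorem~\ref{thm1} argument you did write essentially matches the paper's proof of that theorem, with one genuine divergence: you derive convexity of $\omega$ from monotonicity of $\omega^\prime$ (Lipschitz continuity plus an a.e.\ nonnegative derivative), where the paper instead argues concavity of $\phi$ via the existence of a saddle point (von Neumann); your route is more elementary, and your caveat that $\omega^{\prime\prime}$ exists only away from the breakpoints $\{y_i,\,y_i-1\}$ is a fair point the paper glosses over---but none of this discharges Lemma~\ref{lem1}.
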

Now we are at the position to prove Theorem~\ref{thm1}.
\begin{proof}[Proof of Theorem~\ref{thm1}]
To prove the differentiability of $\omega(\gamma)$, we apply Lemma~\ref{lem1} with $\cX=\IRn$, $\cU=\IR$ and $\Phi=\{ \x\in \cX: \zeros\leq x\leq \ones\}$. 
Immediately we have that: 
i. $\cL(\x,\cdot)$ is differentiable;
ii. $\cL(\x,\gamma)$ and $D_{\gamma}\cL(\x,\gamma)=\x^\top\ones-k$ are continuous on $\cX\times \cU$; 
iii. $\Phi$ is a compact subset of $\cX$; and
iv. $\forall \gamma\in \cU$, $\cL(\x,\gamma)$ has a unique minimizer $\x(\gamma)=\min\big(\ones, [\y-\gamma\ones]_+ \big)$ over $\Phi$.
The last one follows that $\nabla_\x\cL(\x,\gamma)=\zeros$ has a unique solution and further indicates that 
$
\x(\gamma)
= \min\big(\ones, [\y-\gamma\ones ]_+ \big)
= \underset{\x\in \Phi}{\argmin} \,\cL(\x, \gamma).$
Now applying Lemma~\ref{lem1} gives
$
\phi(\gamma)
= \displaystyle \inf_{\x\in \Phi} \cL(\x, \gamma) 
= \cL\big ( \min (\ones,  [\y-\gamma\ones ]_+  ), \gamma\big)
$
is differentiable, and the slope is  
\begin{equation}
\phi^\prime(\gamma) 
= \min\big(\ones,  [\y-\gamma\ones ]_+  \big)^\top\ones - k
= - \omega^\prime(\gamma).
\label{eqn:phi_prime}
\end{equation}

We now show the derivative on $\omega^{\prime}(\gamma)$ w.r.t. $\gamma$ is the number of element of the vector $\v = \y-\gamma\ones$ that is strictly between 0 and 1. 
Let $S_0, S_1, S$ be the sets of indices $i$ that $v_i\leq 0$, $v_i\geq 1$, and  $0<v_i<1$, resp..
Now, expressing $\phi^\prime$ in \eqref{eqn:phi_prime} using the sets $S_0,S_1,S$ gives
\begin{equation}\label{phi_prime_expression}
\phi^\prime(\gamma)
= \left ( \sum_{i\in S_0} 0 + \sum_{i\in S_1} 1 + \sum_{i\in S} (y_i-\gamma) \right ) -k 
= |S_1| + \left ( \sum_{i\in S} y_i\right ) - |S|\gamma - k,
\end{equation}
which gives $\phi^\prime(\gamma) = -|S|\gamma + $constant, hence, $\omega^{\prime\prime}(\gamma) = - \phi^{\prime\prime}(\gamma) = -(-|S|)$ which gives \eqref{dev}.

It is clear that $\cL(x,\gamma)$ in~\eqref{praw_eq} is convex in $\x$ and the constraint set is closed and convex. 
For $(\y,k)$ that the feasible set for $\x$ is nonempty, by von Neumann Lemma~\cite{Neumann28}, there exists a saddle point for Problem \eqref{praw_eq}, as a result,
$
\phi(\gamma) 
$
is concave and $\omega(\gamma) = -\phi(\gamma)$ is convex. 

Lastly, we show the Lipschitz constant of $\omega^{\prime}(\gamma)$ is $n$:
let $\v(\gamma) = \y - \gamma \ones$, then
\[
\begin{array}{rcl}
\Big| \omega^{\prime}(\gamma_1)-\omega^{\prime}(\gamma_2) \Big |  
&=& 
\Big\| \min\Big(\ones, \big[\v(\gamma_1)\big]_+\Big)^\top\ones - \min\Big(\ones, \big[\v(\gamma_2)\big]_+\Big)^\top\ones
\Big \|_2
\\
&\leq& 
\|\ones\|_2 \cdot 
\Big\|
\min\big( \ones,   [\v(\gamma_1)]_+ \big) - 
\min\big( \ones,   [\v(\gamma_2)]_+ \big) 
\Big\|_2
\\
&\overset{\text{Lemma }\ref{lem2}}{\leq} &
\sqrt{n} \| \v(\gamma_1) - \v(\gamma_2) \|_2
~~
= \sqrt{n} \| \gamma_1 \ones - \gamma_2 \ones \|_2
= n | \gamma_1 - \gamma_2 |.
\end{array}
\]
\end{proof}

\subsection{Solving the projection by Newton's method}\label{subsec:newton}
Now it is clear that we solve Problem \eqref{praw} by solving the scalar minimization problem \eqref{optr} using Newton's method, see Algorithm~\ref{algo:newton}.

\begin{algorithm}[]
\SetAlgoLined
Input: a nonzero vector $\y$, and  
initialize $\gamma_0 \in (\max(y_i)-1, \max(y_i))$
\;
{
\For{$t=1, 2, ... $}{
$\v = \y - \gamma_{t-1} \ones$\;
$\gamma_t = \gamma_{t-1} - \dfrac{\omega^{\prime}(\gamma_{t-1})}{\omega^{\prime\prime}(\gamma_{t-1})}$, for $\omega'(\gamma_{t-1})= k -\min\big( \ones,  [\v]_+\big)^\top\ones$, \,
$\omega^{\prime\prime}(\gamma_{t-1}) = \displaystyle \sum_{i=1}^{n} I_{0<v_i<1}$\;
}
\Return $\x^*=\min( \ones,  [\v]_+)$
} 
\caption{Newton's method for solving Problem~\eqref{praw} onto $\Delta_k^=$}
\label{algo:newton}
\end{algorithm}

Before we discuss the case of projection onto $\Delta_k$, we first give some remarks about Algorithm~\ref{algo:newton}.

\paragraph{Quantify the performance of the algorithm: feasibility gap}
As the goal of solving Problem~\eqref{praw} is to find a vector inside the $k$-crapped simplex $\Delta_k^=$, hence we can define the feasibility gap as
\[
\dist(\x, \Delta_k^=)^2 
\coloneqq 
\dist\Big(\x,  \big\{ \u \in \IRn ~|~ \u^\top\ones = k  \big\}\Big)^2
+ \dist\Big(\x,  \big\{ \u \in \IRn ~|~ \zeros \leq \u \leq \ones  \big\}\Big)^2.
\]
By the definition of the algorithm, $\x^*$ is always inside the interval $[\zeros, \ones]$, so we can simplify the feasibility gap as $
\dist\big(\x,  \big\{ \u \in \IRn ~|~ \u^\top\ones = k  \big\}\big) = |\x^\top \ones - k |.
$
For the toy example in Fig.\ref{fig:eg}, we can see that such a gap is monotonically decreasing for the proposed algorithm.

\paragraph{Range for $\gamma^*$ and $\omega^{\prime\prime}(\gamma^*)$ for general problem}
In general, the minimizer $\gamma^*$ can take any value in $[-\infty, +\infty]$ (with $\infty$ included) for any $(\y,k)$.
Also, by definition, $\omega^{\prime\prime}(\gamma^*)$ can take any value in $[0, n]$.
Hence, cases like $\omega^{\prime\prime}(\gamma^*) = 0$ and/or $\gamma^* \in \{\pm \infty\}$ are possible. 
For example, suppose $k=n$, then for any bounded $\y \in \IRn$, one can find that a root of $\omega^{\prime}(\gamma) = \omega^{\prime\prime}(\gamma) = 0 $ is $\gamma = -\infty$.
Furthermore, if the parameter $k$ is too large (or too small) such that $l_1$ and $l_2$ in Fig.\ref{fig:eg} do not intersect each other, it is possible for the algorithm to produce $\gamma^*$ with $\omega^{\prime\prime}(\gamma^*)=0$ and/or $\gamma* \in \{\pm \infty\}$.

\paragraph{On $k$, and the feasible range of $\gamma$}
In practice, $k$ is a user-defined value that is related to the sparsity of the desired solution vector, so $k$ is usually (much) smaller than $n$, therefore we focus on the case $k \ll n$
(in \cref{sec:bio}, we will illustrate the experimental result on solving bioinformatics problems on real-world datasets, where we used $k \ll n$).
In the following, we discuss $\gamma$, assuming $0 < k \ll n$ is selected such that the Problem \eqref{praw} is feasible to solve.
Now we use the toy example in Fig.\ref{fig:eg} as an illustration.
As Problem \eqref{praw} is feasible, we have $l_1$ and $l_2$ in Fig.\ref{fig:eg} intersect at a $\gamma^* \notin \{\pm \infty\}$ (in the toy example, $\gamma^*$ is unique).
Now for the update of $\gamma$ in Algorithm \ref{algo:newton} to work, we need  $\omega^{\prime\prime}(\gamma) \neq 0$.
Notice that $\omega^{\prime\prime}(\gamma) = 0 \iff
\{v_j \leq 0 ~\text{or}~ v_j \geq 1\} ~\forall j$, 
i.e., the ``bad region'' such that $\omega^{\prime\prime}(\gamma) = 0$ is the intersection of all $\cI_j \coloneqq \{\gamma \leq y_j - 1 ~~\text{or}~~ y_j \leq \gamma \}$.
Therefore, the feasible range for $\gamma$ such that $\omega^{\prime\prime}(\gamma) \neq 0$ is the complement of such bad region, i.e,
\begin{equation}\label{intervalgamma}
\gamma ~\in~ \IR \backslash \bigcap_j \cI_j 
~\subset~ \Big]\min\big(y_i\big) - 1,  \max\big(y_i\big)\Big[,
\end{equation}
where $]a,b[$ denotes the open interval $a < x < b$, and $\min(y_i), \max(y_i)$ are the smallest and largest value in $\y$, resp..
We can make use of this to design an initialization of $\gamma_0$.
For example, we can simply set $\gamma_0 \in [\min(y_i) -1 + \delta, \min(y_i)-\delta]$ for a small constant $\delta >0$, which has the cost of $\cO(1)$.
Or, we can search $\gamma_0$ within the interval \eqref{intervalgamma}, with a worst-case complexity of $\cO(n\log n)$, by sorting all the $n$ elements of $y_i$ and trying all the $n$ feasible intervals.
Refer to the toy example in Fig.\ref{fig:eg}, we see that once $\gamma_0$ is initialized within the feasible range, the algorithm will produce a sequence $\{\gamma_t\}_{t \in \IN}$ that converges to the minimizer $\gamma^*$.
The convergence is guaranteed based on the convergence theory of Newton's method.
Furthermore, if $\gamma^* \neq \infty$, the sequence $\{\gamma_t\}_{t \in \IN}$ will not diverges to infinity since all $\omega^{\prime\prime}(\gamma) \neq 0$, based on the following corollary.
\begin{corollary}
If $\gamma^* \neq \infty$ is the minimizer of Problem \eqref{optr}, then $\omega^{\prime\prime}(\gamma^*) \neq 0$.
\begin{proof}
As $\omega^\prime(\gamma^*) = 0$, then
$
\gamma^* \overset{\eqref{eqn:phi_prime}}{=} \frac{k - |S_1| + \sum_{i\in S}y_i}{|S|} \neq 0
$, hence $|S| \neq 0$ and $\omega^{\prime\prime}(\gamma^*) \neq 0$. 
\end{proof}
\end{corollary}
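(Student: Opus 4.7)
My plan is to argue by contradiction, using only the characterization $\omega''(\gamma) = |S(\gamma)|$ from Theorem~\ref{thm1} (where $S(\gamma) = \{i : 0 < v_i(\gamma) < 1\}$ and $v_i(\gamma) = y_i - \gamma$) together with the piecewise-linear formula \eqref{phi_prime_expression} for $\phi' = -\omega'$. Assume $\omega''(\gamma^*) = 0$; this is equivalent to $|S(\gamma^*)| = 0$, so every $v_i(\gamma^*)$ lies in $(-\infty, 0] \cup [1, \infty)$, i.e.\ every coordinate sits in the ``saturated'' region and none contributes to the second derivative.

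Next, I would impose the first-order condition $\omega'(\gamma^*) = 0$ and solve \eqref{phi_prime_expression} for $\gamma^*$, obtaining
$$|S(\gamma^*)|\,\gamma^* \;=\; |S_1| + \sum_{i \in S(\gamma^*)} y_i - k.$$
Under $|S(\gamma^*)| = 0$ the left-hand side vanishes, so either the right-hand side is nonzero and no finite $\gamma^*$ satisfies $\omega'(\gamma^*) = 0$, or the right-hand side also vanishes and the first-order equation imposes no constraint on $\gamma^*$ whatsoever. In both sub-cases the minimizer cannot be pinned down to a finite value, contradicting the hypothesis $\gamma^* \neq \infty$. Hence $|S(\gamma^*)| \geq 1$ and $\omega''(\gamma^*) \geq 1 > 0$.

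The main obstacle will be the second sub-case, in which $\omega'$ is locally identically zero so that the minimizer is not uniquely pinned down. I would resolve this by exploiting monotonicity of $\omega'$ (from convexity of $\omega$ established in Theorem~\ref{thm1}) together with the fact that $\omega'$ changes slope only at the kinks $\gamma = y_i$ and $\gamma = y_i - 1$: because $|S(\gamma^*)| = 0$, no such kink occurs in a neighborhood of $\gamma^*$, so $\omega'$ remains zero at least until the nearest kink on one side, and a step-by-step extension through successive kinks pushes the flat plateau of minimizers out to $\pm\infty$. Consequently $\gamma^*$ could be taken arbitrarily large or small, contradicting its assumed finiteness and completing the contradiction.
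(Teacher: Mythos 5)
Your decomposition into two sub-cases is the right instinct, and sub-case (a) is essentially correct: if $|S(\gamma^*)| = 0$ and $|S_1| \neq k$, then $\omega'(\gamma^*) = k - |S_1| \neq 0$, contradicting the first-order condition at a finite minimizer of a convex differentiable function. The genuine gap is in your resolution of sub-case (b). The claim that the flat plateau of $\omega'$ can be extended ``through successive kinks'' out to $\pm\infty$ is false: the plateau stops at the first kink where some coordinate $v_i = y_i - \gamma$ enters the open interval $(0,1)$, because there the slope $|S(\gamma)|$ becomes strictly positive. Concretely, take $n = 2$, $y_1 = 0$, $y_2 = 10$, $k = 1$. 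For $\gamma \in (0,9)$ we have $v_1 < 0$ and $v_2 > 1$, so $\omega'(\gamma) = k - |S_1| = 0$ and $\omega''(\gamma) = 0$; but for $\gamma \in (-1,0)$ one computes $\omega'(\gamma) = \gamma < 0$, and for $\gamma \in (9,10)$, $\omega'(\gamma) = \gamma - 9 > 0$. The set of minimizers is exactly the bounded interval $[0,9]$, so $\gamma^* = 5$ is a finite minimizer with $\omega''(\gamma^*) = 0$. (Indeed, for any $0 < k < n$ the plateau is always bounded, since $S_0$ and $S_1$ cannot both stay empty as $\gamma$ moves.) There is also a logical soft spot even granting your plateau claim: an unbounded set of minimizers would not by itself contradict the hypothesis, which asserts only that $\gamma^*$ is a finite minimizer, not that every minimizer is finite --- unless you read ``the minimizer'' as asserting uniqueness, which the example above shows can fail.

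The deeper issue, which your gap cannot avoid, is that the corollary as stated is simply false in sub-case (b): the example above satisfies all its hypotheses and violates its conclusion, so no argument can close the gap. For what it is worth, the paper's own proof has the same blind spot: it solves $\omega'(\gamma^*) = 0$ by writing $\gamma^*$ as a fraction with $|S|$ in the denominator, which already divides by $|S|$ and therefore silently presupposes the conclusion $|S| \neq 0$; in effect it only covers your sub-case (a). Your analysis actually exposes the case the paper glosses over. The honest fix is to add a hypothesis that excludes sub-case (b) --- e.g., require $k \neq |S_1(\gamma^*)|$, or assume $\omega'$ is not identically zero on any neighborhood of $\gamma^*$ (for instance, that $\gamma^*$ is the unique minimizer and is not a kink) --- under which your sub-case (a) argument alone completes the proof.
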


\paragraph{Computational cost}
The cost of the Algorithm \ref{algo:newton} mainly comes from line 4, which has
a per-iteration cost of $\cO(n)$.
Assuming it takes the algorithm $T$ iterations to converge, then the total cost inside the loop is $T\cO(n)$.
Theoretically speaking, it can take Newton's method many iterations to converge, but empirically we observed that the algorithm usually converges in a small number of iterations, so $T$ is small, see Table \ref{tab-sim_nk} in \cref{sec:exp}.
In comparison, it costs $\cO(n^2)$ for the method proposed in \cite{wang2015projection} for solving the root of the piecewise-linear equation $\omega^\prime(\gamma) = 0$.

\paragraph{Comparisons with existing methods}
To the best of our knowledge, all existing works solve the projection problem (or a similar type of projection problem) by finding the root of $\omega^\prime (\gamma) = 0$ \cite{condat2016fast,duchi2008efficient,wang2015projection}\footnote{Note that \cite{condat2016fast,duchi2008efficient} do not tackle Problem~\eqref{praw} as they only solve the projection onto the unit simplex.}, in which they solve such a piecewise-linear equation by sorting-based techniques.
As these methods do solve $\omega^{\prime}(\gamma) = 0$ exactly, so when the projection problem is feasible, they will eventually produce an exact result.
In contrast, the proposed method is an iterative numerical algorithm, so it is possible that, due to numerical issues, the proposed method does not produce an exact solution but only an approximate solution.
However, we argue that, for application with an input vector that has a huge dimension, the proposed method is good enough.

If the equation $\omega^\prime (\gamma) = 0$ turns out to have no root for the input pair $(\y,k)$, in this case, all the sorting-based methods will fail, but the proposed algorithm is still able to produce an approximate solution, which can be useful in practice. 

Lastly, as the proposed method has a lower computational cost, the method is very suitable for the projection of high dimensional vectors, see \cref{sec:exp}. 
The proposed method also finds application on large-scale bioinformatics problems, see \cref{sec:bio}.

\paragraph{The case of projection onto $\Delta_k$}
If we want to project onto $\Delta_k$, the same algorithm can be used with the following  twists:
i. 
The feasibility gap becomes 
    \[
    \dist\Big(\x,  \big\{ \u \in \IRn ~|~ \u^\top\ones \leq k  \big\}\Big) = 
    \begin{cases}
    0                & \text{if } \x^\top\ones \leq k \\
    \x^\top\ones - k & \text{else}
    \end{cases}, \text{and}
    \]
ii. $\gamma$ has to be nonnegative, which can be done simply as $\gamma = [\gamma]_+$.
Note that now it is possible for $\omega^{\prime\prime}(\gamma) = 0$, since the safe range \eqref{intervalgamma} may become empty after $\gamma = [\gamma]_+$, for some input vector $\y$.


\section{Experiment on comparing the projection algorithms}\label{sec:exp}
Now we show the experimental results to demonstrate the efficiency of the proposed projection algorithm. 
The algorithm is implemented in MATLAB\footnote{The code is in the appendix.}, and is compared with the MATLAB command quadprog (a MATLAB general-purpose solver for the quadratic program), the C++ implementation of~\cite{wang2015projection}, and a state-of-the-art commercial solver Gurobi~\cite{gurobi} that uses the interior-point method \cite{nesterov2018lectures} to solve the projection problem. 
All experiments were conducted on a PC with an Intel Xeon CPU (3.7GHz) and 32GB memory.

\paragraph{The result of Table~\ref{tab-sim1}: fast convergence of the proposed method across data size}
We compared the methods on the simulation data used in~\cite{wang2015projection}. 
We generate vector $\y \in \IRn$ with $y_i$ uniformly sampled from $[-0.5, 0.5]$. 
We pick $k$ as an integer between 1 and $n$ by random, and we pick the dimension  $n \in \{50, 10^2, 10^3, 10^4, 10^5, 10^6, 10^7, 10^8 \}$. 
We repeat the experiments 100 times, and the comparison results are shown in Table~\ref{tab-sim1}.
We see that as $n$ increases, the efficiency of the proposed algorithm becomes more and more significant.
For large $n$, many methods take more than 60 seconds (denoted as '-' in Table~\ref{tab-sim1}).
The results confirmed that the proposed projection method is the most efficient one among these methods.
In the following experiments, we only compare the proposed algorithm and the commercial solver Gurobi, as they are the two most efficient methods.

\paragraph{The result of Table~\ref{tab-sim_nk}: it only takes a small number of iterations for the algorithm to converge}
We currently do not have a solid theory on characterizing the exact convergent rate for Algorithm \ref{algo:newton}, hence we performed experiments to showcase the relationship between $T$ (the number of iterations for Algorithm 1 to converge) and $n$ (the number of dimension of the input vector $\y$).
From Table \ref{tab-sim_nk}, we see an expected result that runtime and $T$ increase with $n$, and the table empirically supports the claim that it only takes a small number of iterations for the proposed method to converge. 
For example, for $n = 10^6$, the empirical complexity of the proposed approach is about $10 \cO(n)$, which is much lower than $\cO(n \log n)$ and $\cO(n^2)$ with such a large $n$.

\paragraph{The result of Fig.~\ref{fig-c1}: fast convergence of the proposed method across range}
We compare the performance of the methods on solving the projection problem w.r.t. different range of the elements of $\y$.
Here, we generate $\y \in \IRn$ uniformly from $[-\alpha, +\alpha]$ with $n=10^5$ and $\alpha = \{1, 10, 100, \ 1000\}$.
As shown in Fig.~\ref{fig-c1}, the proposed algorithm converges much faster than the commercial solver.
The figure also shows that the performance of the proposed method is not sensitive to $\alpha$.

\begin{table}[ht!]
\caption{Runtime in seconds (in mean$\pm$std) of different methods on various sizes  $n$.}\label{tab-sim1}
\begin{tabular}{c|cccc}
Method   & $5 \times 10^1$    & $10^2$     & $10^3$  & $10^4$ \\ \hline \hline
quadprog & 
0.0128 $\pm$ 0.01058   & 0.0034 $\pm$ 0.0095    & 0.1419$\pm$ 0.0183        & -
\\
C++     & \textbf{0.00003$\pm$ 0.00007} & \textbf{0.00004$\pm$ 0.00004} &0.0011 $\pm$ 0.0014 & 0.10$\pm$ 0.129         
\\
Gurobi   & 0.002 $\pm$  0.0048  & 0.0022$\pm$ 0.0021  & 0.0041$\pm$ 0.0022 & 0.02$\pm$ 0.0032       
\\
Proposed & 0.00013 $\pm$ 0.00014 & 0.00016 $\pm$ 0.00065 & \textbf{0.00039$\pm$ 0.00038} & \textbf{0.0039$\pm$ 0.0037}  
\\
 \hline \hline
Method   & $10^5$  & $10^6$ & $10^7$ & $10^8$ \\ \hline \hline
quadprog & -       & -         & -    & -      \\
C++      & 10.3985$\pm$ 13.703      & -         & -     & -     \\
Gurobi   & 0.2413$\pm$ 0.00117     & 2.8229$\pm$ 0.0955    & 32.26 $\pm$ 0.4953  & -     \\
Proposed & \textbf{0.0406$\pm$ 0.0377}   & \textbf{0.4723 $\pm$ 0.3629}    & \textbf{4.2067 $\pm$ 2.5795}   &  \textbf{14.925 $\pm$ 2.9893} 
\end{tabular}
\end{table}

\begin{table}[ht!]
\caption{
Runtime and \# iterations (in mean) on varying $n,k$ (over 100 experiments).}\label{tab-sim_nk}
\begin{tabular}{c|cc||c|cc}

\makecell{Varying $n$ \\ ($k=10^2$)} & Time & \# iter. $T$    & \makecell{ Varying $k$ \\ ($n=10^6$)}  & Time & \# iter. $T$    \\ \hline \hline
$n=10^4$ & $0.0011\pm 0.00013$ & $6.2\pm 0.4$& $k=10^1$ & $0.15\pm 0.0096$ & $11.4\pm 0.49$
\\ 
$n=10^5$ & $0.0108\pm 0.00081$ & $8.0\pm 0.2$ & $k=10^2$ & $0.13\pm 0.0054$ & $10\pm 0$
\\
$n=10^6$ & $0.13\pm 0.0054$    & $10\pm 0$  & $k=10^3$   & $0.12\pm 0.0084$ & $8.5\pm 0.5$
\\
$n=10^7$ & $2.1\pm 0.03$       & $12\pm 0$  & $k=10^4$ & $0.11\pm 0.0039$ & $7\pm 0$
\\
$n=10^8$ & $21.95\pm 0.046$    & $13\pm 0$  & $k=10^5$      & $0.10\pm 0.0065$ & $5.1\pm 0.2$ \\
\end{tabular}
\begin{tabular}{l|cccc}
 \hline \hline
\end{tabular}
\end{table}

\begin{figure}[ht!]
\begin{center}
\centerline{\includegraphics[width=0.64\paperwidth]{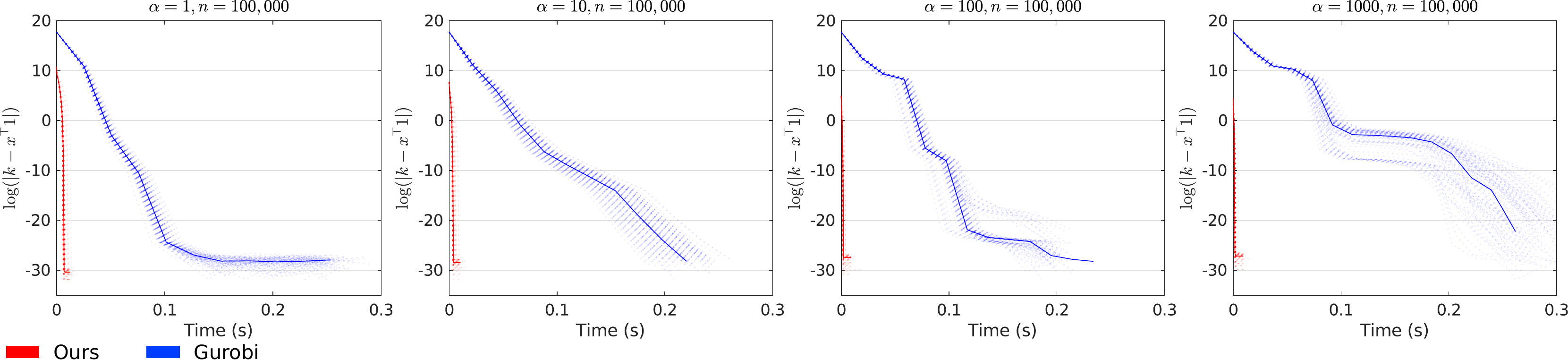}}
\caption{Comparison between the proposed projection method and the Gurobi projection for  $\alpha \in \{1,10,100,1000 \}$. 
The thick curves are the median of the results of over 100 experiments on 100 datasets.
The figure shows the superior performance of the proposed method in terms of runtime.
}
\label{fig-c1}
\end{center}
\end{figure}

\section{Application of the projection in sparse regression}\label{sec:bio}
An application of the proposed projection method is solving the cardinality-constrained $\ell_2^2$-regularized regression~\cite{hastie01statisticallearning}:
\begin{equation}\label{ccp}
\min_{\| \w \|_0 \leq k} 
\left \{
F(\w) \coloneqq \sum_{i=1}^n ( y_ i- \w^\top \x_i)^2 + \frac{1}{2}\rho \|\w \|_2^2 
\right \},
\tag{$P^*$}
\end{equation}
where $\big\{ (\x_i, y_i) \in \IRn \times \IR \big \}_{i=1}^m$ is a collection of $m$ samples of data point with label.
The sparsity-inducing constraint $\|\w\|_0 \leq k$ enforces $\w \in \IRn$ to be at most $k$-sparse (with $k$ non-zero elements).
A recent work~\cite{pilanci2015sparse}   studied the Boolean relaxation of $P^*$, which gives the following problem
\begin{equation}\label{LSR-BR}
\min_{\u\in \Delta_k}
\left\{ G(\u) \coloneqq  \y^\top \left ( \frac{1}{\rho} \X \D(\u)\X^\top + \I \right )^{-1} \y 
\right\},
\tag{$P_\text{BR}$}
\end{equation}
where $\u \in \Delta_k$ means $\u$ has to be inside the $k$-capped simplex.
Both the original work of \cite{pilanci2015sparse} and several follow-up papers~\cite{10.1214/18-AOS1804,10.1214/19-STS701}   showed that the solution of the Boolean relaxation $P_{\text{BR}}$ empirically outperforms the solution of other sparse estimation methods on recovering the sparse features, such as Lasso~\cite{tibshirani96regression} and elastic net~\cite{Zou05regularizationand}, especially when the sample size is small and the feature dimension is huge.
These empirical results inspired us to develop an efficient and scalable algorithm for solving $P_{\text{BR}}$, which in turn motivates the study of solving Problem \eqref{praw}.

Now we show extensive experimental results to demonstrate the efficiency of the proposed method (Algorithm~\ref{algo:newton}) on accelerating the Projected Quasi-Newton (PQN) method~\cite{schmidt09a} on solving $P_{\text{BR}}$ on big data.
To be specific, we use both a simulation dataset and a real-world dataset with various sizes.
We compare the method with a state-of-the-art method, which uses a subgradient method~\cite{10.1214/19-STS701} (denoted as SS) implemented in Julia to solve a min-max problem that is equivalent to $P_{\text{BR}}$. 
In addition, we compare with the classical elastic net (ENet)~\cite{Zou05regularizationand}  as a reference.
For a fair comparison between methods that are implemented in different programming languages, we report the computational time for each algorithm relative to the time needed to compute a Lasso estimator with Glmnet~\cite{glmnet1,glmnet2} on the same programming language, and on the same dataset.
Glmnet is a package that fits the data to a generalized linear model, based on a penalized maximum likelihood estimation method.
Glmnet has been implemented in various programming languages, such as Julia, R, and MATLAB.




\begin{figure}[]
\begin{center}
\centerline{\includegraphics[width=0.64\paperwidth]{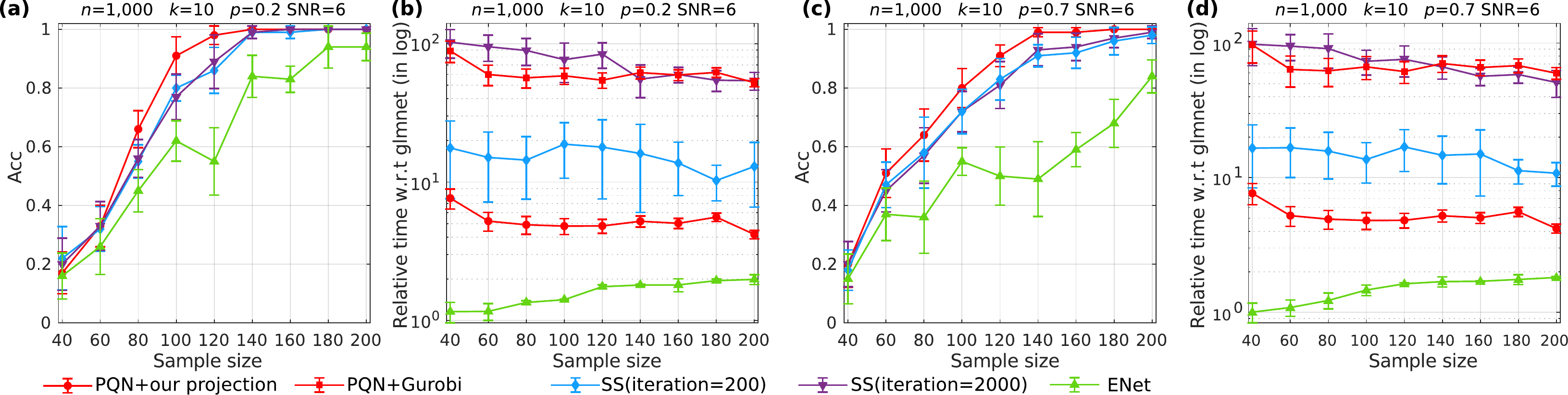}}
\caption{{Comparison between different methods. 
The results are the average over 100 datasets, and all the error bars are in mean$\pm$std.
From left to right, the sub-figures are: 
(a) The Acc values between different methods when $n=10^3$, $k=10$, $p=0.2$, and SNR$=6$;
(b) The computation time of the algorithms in (a);
(c) The Acc values between different methods when $p$ changed to $0.7$; 
and (d) The computation time of the algorithms in (c).
}
}
\label{fig-sim1}
\end{center}
\end{figure}

\paragraph{Sparse regression on simulation data}
First, we conduct experiments on simulation data generated as follows:
let $[m]=\{1,2,\dots,m\}$, we draw $\x_i \sim \cN (\zeros_n, \bSigma), ~ i \in [m]$ independently from a $n$-dimensional normal distribution with zero mean and co-variance matrix $\bSigma$.
We randomly sample a weight vector $\w^{\text{true}}\in \{-1,0,+1\}$ with exactly $k_{\text{true}}$ non-zero coefficients.
Then we generate noise vector $\epsilon \in \IRm$, where $\epsilon_i, ~ i\in [m]$ are drawn independently from a normal distribution scaled according to a chosen signal-to-noise ratio (SNR) defined as $\sqrt{\text{SNR}}=\|\X\w^{\text{true}}\|/\|\epsilon\|$.
Finally, we form $\Y = \X \w^{\text{true}} + \epsilon$, where $\X=[\x_1, \x_2, \dots, \x_n]^\top$.
We evaluate the performance on simulation data using accuracy defined as
$
\text{Acc}(\w) \coloneqq
 \big|  \{ \, i \,:\,  w_i \ne 0, w_{i}^{\text{true}}\ne 0 \, \} \big| 
\, / \,
 \big| \{ \, i \, :\,  w_{i}^{\text{true}}\ne 0 \, \} \big|
.
$


For the first simulation data, we generate 100 datasets by setting $\bSigma_{ij} = p^{|i-j|}$ to a Toeplitz covariance matrix with $p=0.2$,  $n=1,000$, $k_{\text{true}}=20$, and SNR$=6$. 
As shown in Fig.~\ref{fig-sim1} (a), PQN + the proposed projection and PQN + Gurobi have exactly the same performance in in terms of Acc value, which is expected since the only difference between these methods is the projection algorithm.
In addition, the two methods converge quickly to Acc$=1$, outperforming all the other methods. Fig.~\ref{fig-sim1} (b) shows that PQN + the proposed projection is the most efficient algorithm for solving $P_{\text{BR}}$ among all the tested methods.

The second simulation data is generated similarly to the first one. 
The only difference here is that $p=0.7$, indicting the higher correlation between the features, meaning that now it is harder to recover the true support of $\w^{\text{true}}$.
As shown in Fig.~\ref{fig-sim1} (c), PQN + the proposed projection and PQN + Gurobi again have the same performance, and they still outperform all the other methods.
Again, PQN + the proposed projection is the most efficient algorithm for solving the problem here.

For the results on comparing the convergence between PQN and SS, as well as the details of all the parameter settings for all the methods, see the appendix.

\paragraph{Sparse regression on GWAS data}
We conducted experiments to evaluate the efficiency of the proposed projection algorithm on a Genome-Wide Association Study (GWAS) dataset.
The data was obtained from the dbGaP web site, under the dbGaP Study Accession phs000095.v3.p1\footnote{\url{https://www.ncbi.nlm.nih.gov/projects/gap/cgi-bin/study.cgi?study_id=phs000095.v3.p1}}.
This study is part of the Gene Environment Association Studies initiative with the goal to identify novel genetic factors that contribute to dental caries through large-scale genome-wide association studies of well-characterized families and individuals at multiple sites in the United States.
The data we used in this experiment contains 227 Caucasians with 11,626,696 Single-nucleotide polymorphisms (SNPs) for all chromosomes. 
Here, the goal is to identify the important SNPs by performing a sparse regression.
We solve the Boolean-relaxed sparse regression problem $P_{\text{BR}}$ on chromosome 20, 5, and 2, which have 252,257 SNPs, 750,860 SNPs, and 1,562,480 SNPs, resp..

Fig.~\ref{fig-sim2} shows the comparison result between PQN + the projection and PQN + Gurobi. 
We find that PQN + the proposed projection outperforms in all three problems (with different numbers of SNPs) in terms of computational time.
In addition, PQN + the proposed projection and PQN + Gurobi converge to the same objective function value,  indicating the correctness of the proposed projection algorithm.

\begin{figure}[]
\begin{center}
\centerline{\includegraphics[width=0.64\paperwidth]{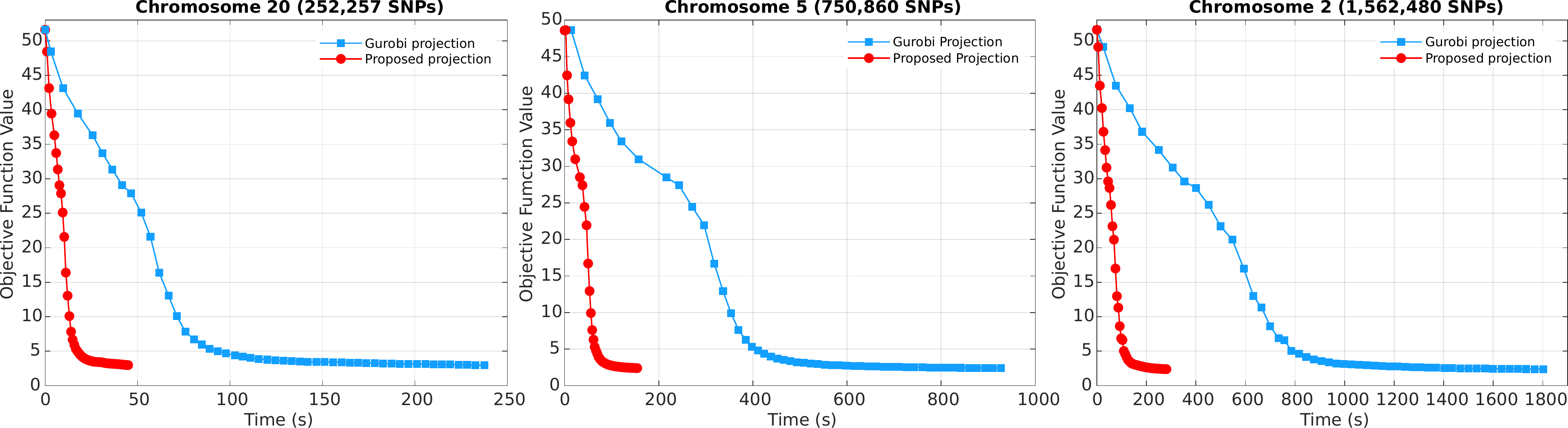}}
\caption{{Convergence comparison between PQN with the proposed projection algorithm and PQN with Gurobi projection. 
From left to right, the sub-figures are: 
(a) Comparison on chromosome 20;
(b)Comparison on chromosome 5; and
(c) Comparison on chromosome 2.
}}
\label{fig-sim2}
\end{center}
\end{figure}

\paragraph{Results on comparing different methods on the simulated GWAS data}
To further demonstrate the practical impact of solving $P_{\text{BR}}$ using the proposed algorithm, here we further compare different methods using the simulated GWAS data described in~\cite{10.3389/fgene.2013.00270}.

We compared $P_{\text{BR}}$ with Lasso~\cite{tibshirani96regression} and Elastic Net (ENet). 
We used the out-of-sample Mean Square Error (MSE) to select the parameter $k$ for $P_{\text{BR}}$.
For Lasso and ENet, we used cross-validation with the minimum MES+1SE (Standard Error) to tune the hyper-parameters, which was suggested by~\cite{10.3389/fgene.2013.00270}. 
There are three different simulated GWAS datasets as stated in~\cite{10.3389/fgene.2013.00270}, in which they have different levels of correlation (LD: High, Mixed, and Low) between SNPs. 
All the datasets have 50,000 SNPs and 25 significant SNPs. In Table~\ref{tab-gwas1}, we showed the comparison when the sample size is 1000 and 100. Here we report the median and the standard deviation (in parentheses) over 100 replicates. We can see that $P_{\text{BR}}$ can find more correct SNPs with low false positive rate, especially when the sample size is only 100. 
For the comparisons when the sample size is 150 and 50, see the appendix.

\begin{table}[ht!]
\caption{SNP=50,000, significant SNPs = 25.}\label{tab-gwas1}
\centering
\begin{tabular}{ll|ccc||ccc}
& &  \multicolumn{3}{c||}{Sample size = 1000} &   \multicolumn{3}{c}{Sample size = 100}
\\
& & Lasso & ENet & $P_{\text{BR}}$ & Lasso & ENet & $P_{\text{BR}}$ 
\\ \hline \hline
High LD & Correct & 3(0.91)& 25(0) & 25(0) 
& 2(1.01)	& 20(3.47)	& 25(1.52)
\\
& False positive & 0(0) & 0(0.1) & 0(0)
& 1(0.93)	& 4(1.41)	& 1(1.03)
\\
Mixed LD & Correct & 3(0.75)& 18(1.47) & 25(0) 
& 2(1.35) &	16(1.47) &	24(2.03)
\\
& False positive & 0(0) & 0(1.19) & 0(0)
& 1(0.68) & 2(0.35)	& 2(1.24)
\\
Low LD & Correct & 17(2.12)& 25(0.51) & 25(0) 
& 5(2.12) & 17(3.51) & 23(3.07)
\\
& False positive & 0(0.39) & 0(2.06) & 0(0)
& 2(0.59) & 5(2.06)	& 3(1.47)
\end{tabular}
\end{table}

\vspace{-0.2cm}
\section{Conclusion}\label{sec:conc}
\vspace{-0.2cm}
We propose to use Newton's method to solve the problem of projecting a vector onto the $k$-capped simplex.
On the theory side, we transform the problem to a scalar minimization problem and provide a theorem to characterize the derivatives of the cost function of such a problem.
On the practical side, we show that the proposed method outperforms existing methods on solving the projection problem, and show that it can be used to accelerate the method on solving sparse regressions in bioinformatics on a huge dataset.

\vspace{-0.2cm}
\section{Acknowledgments}\label{sec:conc}
\vspace{-0.2cm}
The presented materials are based upon the research supported by the Indiana University's Precision Health Initiative. 
Andersen Ang acknowledges the support by a grant from NSERC (Natural Sciences and Engineering Research Council) of Canada.

\bibliographystyle{abbrv}
\bibliography{library}

\clearpage

\newpage
\appendix
\section{Appendix}

\subsection{About the reference method ENet on the experiment in Section 4}
ENet is the following regularized least squares model~\cite{tibshirani96regression}:
\begin{equation*}
\label{enet}
\min_{ \w \in \mathbb{R}^n} 
\left \{
\sum_{i=1}^n ( y_ i- \w^\top \x_i)^2 + \lambda_1 \|\w \|_1 + \lambda_2 \|\w \|_2^2 \right \},
\end{equation*}
where $\lambda_1$ and $\lambda_2$ are regularization parameters.
The authors solve this problem by an algorithm called LARS-EN~\cite{tibshirani96regression}, which we will abuse the notation and refer it also as ENet below.

\subsection{Optimization details in Section 4}
We now give more details on the experiments discussed in  Section 4.
First, we briefly recall about the methods that we compare.

\subsubsection{The three methods: PQN and SS}
First, both the PQN + the proposed projection and PQN + Gurobi solve Problem~\eqref{LSR-BR}, where 
they differ in the projection method.
For the details of PQN, we refer to~\cite{schmidt09a} (Algorithm 1) and the book chapter~\cite{6824}.   
Next, the SS method solves a re-formulation that is equivalent to~\eqref{LSR-BR}~\cite{10.1214/19-STS701}. 
Specifically, the SS method, which is a dual sub-gradient type method, aims to solve the following quadratic min-max problem
\begin{equation}\label{LSR-BR-eq}
\min_{\u\in \Delta_k}
\max_{\v\in \mathbb{R}^m}\left\{  -\dfrac{1}{2}\v^\top\left ( \dfrac{1}{\rho} \X \D(\u)\X^\top + \I \right )\v - \v^\top \y
\right\},
\tag{$P_\text{BR}^*$}
\end{equation}
where $\v$ is a dual variable to $\w$.
It is important to note that, the problems~\eqref{LSR-BR} and~\eqref{LSR-BR-eq} are shown to be equivalent~\cite{10.1214/19-STS701,pilanci2015sparse}.

\subsubsection{The experimental settings}
\paragraph{On simulation datasets}
For comparison on simulation datasets, PQN + the proposed projection and PQN + Gurobi share the same initialization, which is $\u_0 = \dfrac{k}{n}\bm{1}$.
It is trivial to verify that $\u_0 \in \Delta_k$.
Following the implementations, the SS method and the reference model ENet are both randomly initialized~\cite{10.1214/19-STS701, tibshirani96regression}. 
We let both PQN + the proposed projection and PQN + Gurobi run $50$ iterations, and we let SS method run 200  (as suggested in~\cite{10.1214/19-STS701}) and 2,000 iterations.

There are two hyper-parameters $\rho$ and $k$ for $P^*$ and $P_{\text{BR}}$. 
We set $\rho = \dfrac{1}{\sqrt{m}}$ as recommend in~\cite{10.1214/19-STS701} for PQN + the proposed projection, PQN+Gurobi, and the SS method. 
We assume that we know $k=k_{\mathrm{true}}$ as the ground-truth. 
Specifically, in Fig.~\ref{fig-sim1}, $k = 20$ is used for all the experiments.
For ENet, which do not have the parameter $k$, we screen through the values of $\lambda_1$ and $\lambda_2$ such that the output vector $\w$ has exactly $k=20$ non-zero elements. 

\paragraph{On real-world datasets}
For the comparison on GWAS datasets, we initialize PQN + the proposed projection and PQN + Gurobi by setting $\u_0 = \dfrac{k}{n}\bm{1}$.
We let both PQN + the proposed projection and PQN + Gurobi run $50$ iterations.
Furthermore, we set $\rho = \dfrac{1}{\sqrt{m}}$ and $k=100$ for both methods.
We emphasize that these two methods share exactly the same setting in the experiments, while their difference is only on the way they solve the projection problem.

\begin{figure}[t!]
\begin{center}
\centerline{\includegraphics[width=0.64\paperwidth]{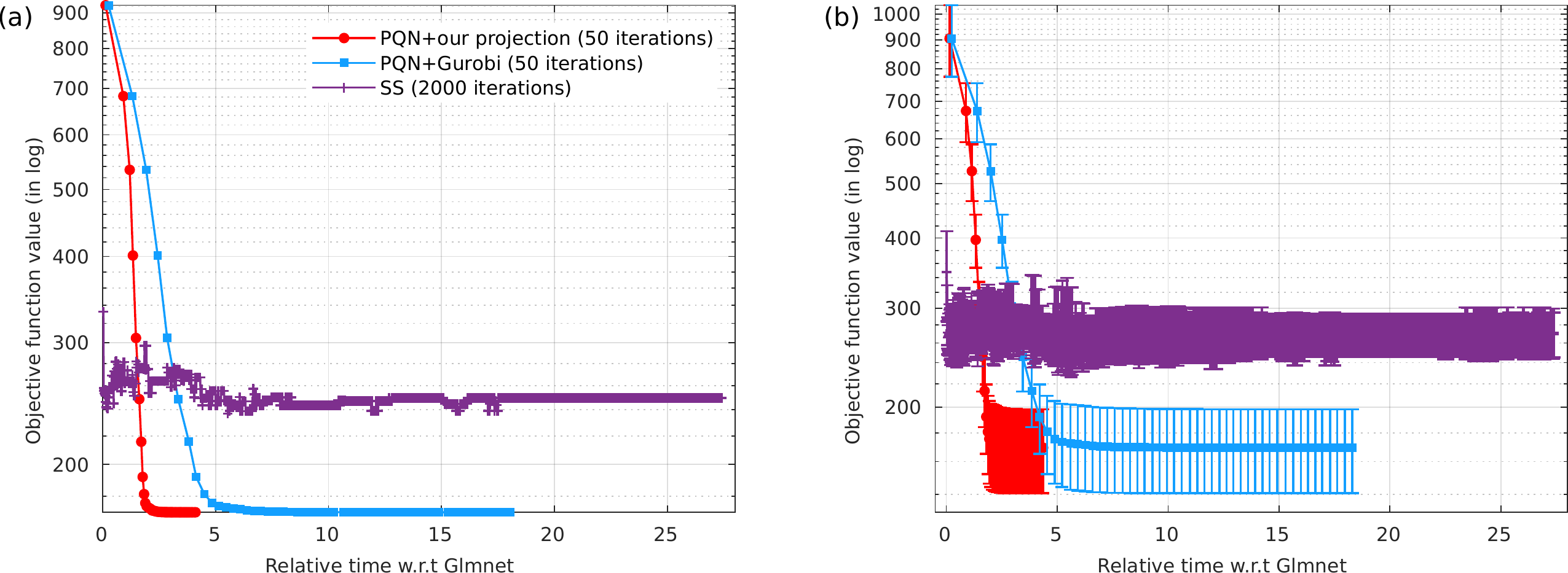}}
\caption{Convergence comparison (plotted in error bar) between PQN + our projection, PQN + Gurobi, and the SS method.
(a) Convergence comparison on a one dataset. 
(b) Converge comparison on 10 different datasets.
}\label{fig-converge}
\end{center}
\end{figure}

\subsection{Convergence Comparison on Solving~\ref{LSR-BR}}
In this section, we compare the convergence between PQN + the proposed projection, PQN + Gurobi, and the SS method. 
PQN + the proposed projection and PQN + Gurobi solve Problem~\eqref{LSR-BR}, and the SS method solves an equivalent re-formulation~\ref{LSR-BR-eq}. 
As shown in Fig.~\ref{fig-sim1} (a) and (c) in the main text, the PQN methods have a better performances on discovering the correct features than the SS method.
This can be explained by Fig.\ref{fig-converge} (a): clearly, although PQN + the proposed projection, PQN + Gurobi, and the SS method are solving the same problem, they have very difference convergence performance.
PQN + the proposed projection and PQN + Gurobi converge to the same objective function value, which is much lower than that of the SS method.
We emphasize that, this has been observed in all of our experiments.

Then, we run these three methods on 10 different simulation datasets generated by the procedure described in \cref{sec:bio} ($n=10^3$, $k=10$, $p=0.2$, $m=100$, and SNR$=6$), see  Fig.\ref{fig-converge} (b) for the convergence performance.
We have the consistent observation that, PQN + the proposed projection and PQN + Gurobi converge to lower objective function values than the SS method. 
The convergence performances (as shown in Fig.\ref{fig-converge}) give the reason why, PQN + the proposed projection and PQN + Gurobi have a better performance in accuracy on finding the features 
(as shown in Fig.\ref{fig-sim1}) than the SS method.

\subsection{Further results on comparing different methods on the simulated GWAS data}

The following table gives further results corresponding to Table \ref{tab-gwas1} in the main text.

\begin{table}[ht!]
\caption{SNP=50,000, significant SNPs = 25.}\label{tab-gwas2}
\centering
\begin{tabular}{ll|ccc||ccc}
& &  \multicolumn{3}{c||}{Sample size = 150} &   \multicolumn{3}{c}{Sample size = 50}
\\
& & Lasso & ENet & $P_{\text{BR}}$ & Lasso & ENet & $P_{\text{BR}}$ 
\\ \hline \hline
High LD & Correct & 2(1.91)	& 24(1.37)& 	25(0.46)
& 2(1.21) &	17(4.47)&	19(3.52)
\\
& False positive & 1(0.92)&	10(1.92)&	1(0.47)
& 4(1.93) &	14(3.41)&	5(2.03)
\\
Mixed LD & Correct & 3(1.75) &	19(1.87)&	24(0.87)
& 2(1.35) &	12(2.47)&	16(2.23)
\\
& False positive & 1(0.63) &	16(1.09) &	1(0.24)
& 2(1.68)	& 8(2.35)& 	6(2.24)
\\
Low LD & Correct & 7(1.23) &	20(0.81)&	25(1.26)
& 1(0.32) &	12(3.61)&	15(3.27)
\\
& False positive & 3(0.72) &	18(1.26) &	1(0.27)
& 0(0.19) &	31(5.06)&	10(3.47)
\end{tabular}
\end{table}

\subsection{The MATLAB codes for the projection}
The codes are available at \url{https://anonymous.4open.science/r/Projection_Capped_Simplex-BB7C}.

\end{document}